\newtheorem{thm}{Theorem}[section]
\newtheorem{lem}[thm]{Lemma}
\newtheorem{prop}[thm]{Proposition}
\newtheorem{cor}[thm]{Corollary}
\theoremstyle{definition}
\newtheorem{rmk}[thm]{Remark}
\numberwithin{equation}{section}
\newcommand{\tf}{\widetilde{f}}
\newcommand{\ZZ}{\mathbb{Z}}
\newcommand{\CC}{\mathbb{C}}
\newcommand{\QQ}{\mathbb{Q}}
\DeclareMathOperator{\Ann}{Ann}
\DeclareMathOperator{\SL}{SL}
\DeclareMathOperator{\GL}{GL}
\newcommand*{\df}{\mathrel{\vcenter{\baselineskip0.5ex \lineskiplimit0pt
                     \hbox{\scriptsize.}\hbox{\scriptsize.}}} =}
\author[Cameron Franc \and Geoffrey Mason]{Cameron Franc \and Geoffrey Mason}
\address{Department of Mathematics\\
University of California, Santa Cruz}
\email{cfranc@ucsc.edu\\ gem@ucsc.edu}
\thanks{The second author is supported by the NSF}
\keywords{Vector-valued modular form, hypergeometric function}
\begin{document}

\title[Fourier coefficients of VVMFs of dimension $2$]{Fourier coefficients of vector-valued modular forms of dimension $2$}

\begin{abstract} We prove the following Theorem.\ Suppose that $F=(f_1, f_2)$ is a $2$-dimensional vector-valued modular form
on $\SL_2(\ZZ)$ whose component functions $f_1, f_2$ have \emph{rational} Fourier coefficients
with \emph{bounded denominators}.\ Then $f_1$ and $f_2$ are classical modular forms on a congruence subgroup of the modular group.\
MSC.\\
MSC (2000).\ Primary 11F30, Secondary 33C20.
\end{abstract}
\maketitle

\section{Introduction}\label{S1}
Let $\Gamma= \SL_2(\ZZ)$ with 
$\rho:\Gamma\rightarrow \GL_n(\CC)$  a representation of $\Gamma$.\ For the purposes of the present paper, a \emph{vector-valued modular form of integral weight $k$ associated to $\rho$} is a column vector of
functions $F(\tau)=\ ^t(f_1(\tau), \hdots, f_n(\tau))$ holomorphic in the  upper half-plane and satisfying
\begin{eqnarray*}
F|_k\gamma(\tau) = \rho(\gamma)F(\tau)\ \quad (\gamma \in \SL_2(\ZZ)).
\end{eqnarray*}
Moreover, each component function is assumed to have a left-finite $q$-expansion  
 \begin{eqnarray*}
 f_i(\tau)=q^{m_i}\sum_{i=0}^{\infty} a_{ni}q^{n},
 \end{eqnarray*}
 where, as usual, $q=e^{2\pi i \tau}$ and $\tau$ is the coordinate on the upper half-plane.\ We are concerned here with vector-valued modular forms with the property
 that all of the Fourier coefficients $a_{ni}$ are \emph{rational numbers}.\ In this case it is known 
 \cite{MA} that the exponents $m_i$ are also rational.
 
\medskip
Suppose that $a(\tau)=q^h\sum_n a_nq^n$ is a $q$-expansion with 
 coefficients $a_{n}\in \QQ$.\ We say that $a(\tau)$ has  \emph{bounded denominators} if there is an integer $N$ such that
$Na_{n}\in \ZZ$ for all $n$.\ Otherwise, $a(\tau)$ has \emph{unbounded denominators}.\ If $F(\tau)$ is a vector-valued modular form whose components $f_i(\tau)$ have rational Fourier coefficients, we say that $F(\tau)$ has bounded (respectively unbounded) 
denominators if \emph{each}  $f_i$ (respectively, \emph{some} $f_i$) has bounded (respectively unbounded) denominators.\
The second author has conjectured (see \cite{M2} for the case of two-dimensional $\rho$) that the following are \emph{equivalent}:
\begin{eqnarray*}
&&\mbox{(a)\ $F(\tau)$ has rational Fourier coefficients with bounded denominators}, \\
&&\mbox{(b)\ Each $f_i(\tau)$ is a modular form on a congruence subgroup of $\Gamma$}.
\end{eqnarray*}

\medskip
The main result of the present paper is a proof of the conjecture for $2$-dimensional representations 
$\rho$.

\medskip
 The theory of $2$-dimensional vector-valued modular forms was developed in
\cite{M1}, \cite{M2}, and in particular the conjecture was proved in \cite{M2} for all but finitely
many $2$-dimensional $\rho$.\ We recall some of the ideas (under the assumption that $\rho$ is irreducible) since they will play a r\^{o}le
in the present paper.\ There is a
unique normalized nonzero holomorphic vector-valued modular form $F_0$
of \emph{least integral weight} $k_0$, and the components of $F_0$ constitute a fundamental
system of solutions of the linear differential equation
\begin{equation}
\label{diffeq}
  (D_{k_0+2}\circ D_{k_0} - k_1 E_4)f = 0.
\end{equation}
Here and below, we use the following notation\footnote{The normalization of $E_2$ used here differs from that in
\cite{M1}, \cite{M2}}:\ for an even integer $k \geq 2$, $E_k$ is the usual weight $k$ Eisenstein series with $q$-expansion
\[
  E_k(q) = 1 + \frac{2}{\zeta(1-k)}\sum_{n \geq 1} \sigma_{k-1}(n)q^n,
\]
and for an integer $k$ we have the weight $2$ operator
\[
  D_k = q\frac{d}{dq} -\frac{k}{12}E_2.
\]
Written in terms of $q$, (\ref{diffeq}) has a regular singular point
at $q=0$, the indicial roots are the exponents $m_1, m_2$, and
\begin{eqnarray}\label{kdef}
  k_0 = 6(m_1+m_2)-1\in \ZZ,\quad k_1 &=& \frac{36(m_1-m_2)^2-1}{144}.
\end{eqnarray}

\medskip
The recursive formula for the Fourier coefficients $a_n$
 of $f_1$ shows that they are rational when $f_1$ is suitably normalized.\ Moreover, it is shown in \cite{M2} that for almost all $\rho$, there is a prime $p$ (depending on $\rho$) such that the $p$-adic valuation
of $a_n$ is \emph{strictly decreasing} for $n\rightarrow \infty$.\ Thus $f_1$ has unbounded denominators
for such $\rho$.\ In the remaining exceptional cases (approximately $300$ isomorphism classes of $\rho$) this method will fail because, as numerical computations show,  there is no such prime $p$.\ Thus another device is needed to achieve unbounded denominators in these cases.\ (The exceptions
include $54$ classes of \emph{modular} $\rho$ for which the components are modular forms on a
congruence subgroup, and for these cases one of course  has bounded denominators.)\ Further techniques are used to show that if
$F_0$ has unbounded denominators then every nonzero $F$ with rational Fourier coefficients has the same property.

\medskip
In the present paper we will show that for all choices of $\rho$, exceptional or not, the component functions $f_1, f_2$
can be described using Gauss's hypergeometric function $F(a, b; c; j^{-1})$ evaluated at the inverse of
the absolute modular invariant $j$ (cf.\ Proposition \ref{prop1} below).\ This will allow us to
show (in the nonmodular cases) that \emph{infinitely many primes} occur in the denominators of the Fourier coefficients, and in particular that denominators are unbounded.\ 
 In fact, more precise arithmetic information is available in this situation, as we will explain in due course. 
 
 \medskip
 Hypergeometric series appear in the work of Bantay and Gannon \cite{BG} on vector-valued modular forms and
 the `fundamental matrix'. In hindsight we observed that the technique of this paper is strongly suggested by the work of Bantay and Gannon, but we in fact drew our inspiration from an earlier paper of Kaneko and Zagier \cite{KZ} on supersingular $j$-invariants.\
 Kaneko and Zagier considered a special case of (\ref{diffeq}) for which one of the solutions
 is a modular form (the corresponding $\rho$ is \emph{indecomposable}),
 and they use a change of local variable to reexpress the modular form in terms of hypergeometric series. Other papers, for example \cite{KK} and \cite{T}, have also studied connections between modular linear differential equations and hypergeometric differential equations.

\medskip
We will prove the following results.
\begin{thm}
\label{thm1:main}
Let $m_1, m_2$ be rational numbers such that $m_1-m_2=P/Q$,
gcd$(P, Q)=1$, and $Q\geq 2$, and let $k_0, k_1$ be as in (\ref{kdef}).\ Then (\ref{diffeq}) has two linearly independent solutions $f_1, f_2$  with rational $q$-expansions, and exactly one of the following is true:
\begin{enumerate}
\item at least one of $f_1$ or $f_2$ has unbounded denominators,
\item $Q\leq 5$.
\end{enumerate}
\end{thm}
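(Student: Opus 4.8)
The plan is to use Proposition \ref{prop1} to replace the analytic statement about the $q$-expansions of $f_1,f_2$ by an arithmetic statement about the Taylor coefficients of a Gauss hypergeometric series. Taking the standard hypergeometric variable to be $x=1728/j$, so that the three singular points $j=\infty,1728,0$ of (\ref{diffeq}) become $x=0,1,\infty$, the indicial analysis forces the exponent differences there to be $P/Q$ (at the cusp), $1/2$ (at the order-$2$ point) and $1/3$ (at the order-$3$ point); solving for the parameters then gives, up to the usual hypergeometric symmetries, $a=\tfrac{5}{12}-\tfrac{P}{2Q}$, $b=\tfrac{1}{12}-\tfrac{P}{2Q}$, $c=1-\tfrac{P}{Q}$. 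Since $1/j\in q\ZZ[[q]]$ has unit leading coefficient, and since $E_4,E_6,\Delta$ together with their inverse units have integral $q$-expansions, a composition/substitution argument shows that $f_1$ (and likewise the companion solution of leading exponent $1-c$) has bounded denominators if and only if the rescaled coefficients $1728^{\,n}c_n$ do, where $c_n=\frac{(a)_n(b)_n}{(c)_n\,n!}$. Because $1728=2^6 3^3$, the factor $1728^{\,n}$ absorbs the $2$- and $3$-adic denominators — a crude estimate on the valuations of Pochhammer symbols gives $v_2(c_n)\ge -6n$ and $v_3(c_n)\ge -3n$ — so the entire question reduces to the $p$-integrality of $c_n$ for primes $p\ge 5$.

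For the substantive direction I assume $Q\ge 6$ and produce infinitely many primes dividing denominators. Fix a prime $p\ge 5$ with $p\nmid Q$, so that $a,b,c$ are $p$-integral; then the first index at which $p$ divides $(a)_n,(b)_n,(c)_n$ is governed by the least nonnegative residues $\{-a\}_p,\{-b\}_p,\{-c\}_p$, while $n!$ contributes no factor of $p$ before $n=p$. Consequently, if $\{-c\}_p<\min(\{-a\}_p,\{-b\}_p)$ then $v_p(c_n)=-1<0$ for every $n$ in the nonempty range $\{-c\}_p<n\le\min(\{-a\}_p,\{-b\}_p)$, so $p$ appears in a denominator. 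As $p\to\infty$ through a fixed residue class modulo $12Q$, the ratios $\{-a\}_p/p,\{-b\}_p/p,\{-c\}_p/p$ converge to fractional parts with denominator dividing $12Q$ and determined by that class; the crux is that when $Q\ge 6$ there is such a class in which the limiting inequality $\gamma<\min(\alpha,\beta)$ holds strictly, where $\alpha,\beta,\gamma$ denote these limits. Dirichlet's theorem then furnishes infinitely many primes $p$ in that class, each dividing some denominator, and since distinct primes cannot all be cleared by a single integer $N$, at least one of $f_1,f_2$ has unbounded denominators.

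For the complementary direction I suppose $Q\le 5$ and must show that both $f_i$ have bounded denominators, so that the two alternatives are genuinely mutually exclusive. In this range the triple $(P/Q,1/2,1/3)$ lies in Schwarz's list, the projective monodromy of (\ref{diffeq}) is finite, and the solutions are algebraic over $\CC(j)$. Concretely the residue analysis of the previous paragraph runs in reverse: for every prime $p\ge 5$ with $p\nmid Q$ and every class modulo $12Q$, the numerator occurrences of $p$ (controlled by $\{-a\}_p,\{-b\}_p$) keep pace with those in $(c)_n\,n!$, forcing $c_n$ to be $p$-integral for all $n$; the primes $2,3$ are absorbed by $1728^{\,n}$ as before, and the only remaining prime, $p=5$ in the case $Q=5$, is handled by the observation that these finitely many forms are among the classical modular cases and so have bounded denominators. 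Combining the two directions shows that exactly one of (1), (2) holds.

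The main obstacle is the valuation bookkeeping in the middle paragraph: pinning down the precise residue class modulo $12Q$ and verifying that the limiting inequality $\gamma<\min(\alpha,\beta)$ among the first-occurrence positions holds for \emph{all} $Q\ge 6$ yet fails for \emph{every} class when $Q\le 5$. This is the arithmetic shadow of the statement that the interlacing (equivalently, spherical, finite-monodromy) condition for the triple $(P/Q,1/2,1/3)$ breaks down exactly at $Q=6$; making the argument uniform in $P$ with $\gcd(P,Q)=1$, and robust enough to survive the $2$- and $3$-adic absorption by $1728^{\,n}$, is where the real work lies.
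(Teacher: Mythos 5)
Your overall strategy is the paper's own: pass through Proposition \ref{prop1} to the hypergeometric coefficients, exhibit infinitely many primes in their denominators when $Q\geq 6$, and finish with Dirichlet's theorem. But the decisive step is missing. You reduce the problem to a limiting interlacing inequality $\gamma<\min(\alpha,\beta)$ among the first indices at which a prime $p$ divides $(a)_n$, $(b)_n$, $(c)_n$, for $p$ ranging over a suitable residue class modulo $12Q$ --- and then you state outright that identifying that class and verifying the inequality for all $Q\geq 6$ ``is where the real work lies.'' That verification is not bookkeeping; it \emph{is} the theorem, and your proposal does not contain it. The paper does it in a few lines by a far more concrete choice. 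Writing the coefficients of the relevant solution as in (\ref{phrat1}),
\[
C_n=(144Q)^{-n}\prod_{k=0}^{n-1}\frac{(12Qk+Q+6P)(12Qk+5Q+6P)}{(Qk+Q+P)(k+1)},
\]
one takes $p=Qn+P$ prime (assuming $P>0$; when $P<0$ one works with the companion solution and $p=Qn-P$, so which of $f_1,f_2$ carries the denominators depends on the sign of $P$ --- a dichotomy your write-up would also need to track). This $p$ is exactly the denominator factor $Qk+Q+P$ at $k=n-1$, it is coprime to $144Q$ and to all other denominator factors, and if it divided a numerator factor it would divide $Q(12k+1-6n)$ or $Q(12k+5-6n)$; since these bracketed integers are odd, hence nonzero, and of absolute value less than $6n+1\leq Qn+P$, this is impossible. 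Hence $v_p(C_n)=-1$ for \emph{every} prime in the progression (Lemma \ref{lemQ>5}), and the argument of Proposition \ref{prop2} transfers this to an actual Fourier coefficient of $f_1$. Your residue-class scheme is a known route (it is the interlacing criterion for $p$-integrality of hypergeometric series) and could in principle be pushed through, but as written it is a plan, not a proof.

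There are also two secondary defects. First, your reduction ``$f_1$ has bounded denominators iff $1728^n c_n$ does, and only $p\geq 5$ matters'' is not correct as stated: the bounds $v_2(c_n)\geq -6n$, $v_3(c_n)\geq -3n$ are not uniform in $Q$ (they fail, e.g., when $16\mid Q$), and primes $p\geq 5$ dividing $Q$ are covered neither by the absorption claim nor by your residue analysis, which assumes $p\nmid Q$. The paper sidesteps every global integrality question by arguing one prime at a time, only at primes in the progression $Qn\pm P$, which are automatically coprime to $12Q$; this is why Proposition \ref{prop2} needs only a one-directional transfer statement. Second, your exclusivity direction ($Q\leq 5$ implies bounded denominators) is likewise left incomplete: the clean argument, which you begin but abandon, is finiteness of the projective monodromy (Schwarz) giving algebraicity of the solutions over $\QQ(j)$, followed by Eisenstein's theorem on algebraic power series; a prime-by-prime ``reverse'' residue count, with $p=5$ waved off as a ``classical modular case,'' does not establish it.
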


\begin{thm}
\label{thm2:main}
  Let $F$ denote \emph{any} $2$-dimensional vector valued modular form whose components have rational Fourier coefficients.\ Then exactly one of the following is true:
\begin{enumerate}
\item at least one of the components of $F$ has unbounded denominators;
\item both components of $F$ are modular forms on a congruence subgroup.
\end{enumerate}
\end{thm}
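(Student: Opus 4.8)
The two alternatives are mutually exclusive, since a modular form on a congruence subgroup with rational Fourier coefficients automatically has bounded denominators; so the content of the theorem is that \emph{at least} one alternative holds. The plan is to reduce this assertion, for an arbitrary $F$, to the arithmetic of the minimal form $F_0$ attached to the representation $\rho$ to which $F$ belongs, and then to feed that into Theorem \ref{thm1:main}. We may assume $F\neq 0$, and we argue by cases on $\rho$, treating the irreducible case first.

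First I would reduce from an arbitrary $F$ to $F_0$. By the structure theory of \cite{M1} the graded module of all vector-valued modular forms for an irreducible $\rho$ is free of rank $2$ over $M(\Gamma)=\CC[E_4,E_6]$, with a basis of the form $\{F_0,\ F_1\}$ where $F_1=D_{k_0}F_0$ has weight $k_0+2$; both generators have rational Fourier coefficients, since $F_0$ does and $D_{k_0}$ preserves rationality. Because $E_4,E_6$ have integral $q$-expansions and $D_k=q\frac{d}{dq}-\frac{k}{12}E_2$ sends a $q$-series with denominators dividing $N$ to one with denominators dividing $12N$, both generators have bounded denominators as soon as $F_0$ does; and writing our rational $F$ in this basis gives coefficients in $\QQ[E_4,E_6]$, so $F$ inherits bounded denominators from $F_0$. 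For the reverse implication I would quote the propagation result recalled in the introduction (from \cite{M2}): if $F_0$ has unbounded denominators then so does every nonzero rational $F$. Thus the dichotomy for $F$ is equivalent to the dichotomy for $F_0$.

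Next I would apply Theorem \ref{thm1:main} to the components $f_1,f_2$ of $F_0$, which form a fundamental system for (\ref{diffeq}) with exponent difference $m_1-m_2=P/Q$. When $Q\geq 2$ the theorem gives the clean split: either some $f_i$ has unbounded denominators, whence by the previous paragraph some component of $F$ does and alternative (1) holds, or $Q\leq 5$, in which case $F_0$ and hence $F$ have bounded denominators. It then remains to show that in the bounded case the components of $F_0$ are modular on a congruence subgroup; granting this, the Serre derivative $D_{k_0}$ and multiplication by $E_4,E_6$ keep us on the same congruence subgroup, so every component of every $F$ is congruence modular, which is alternative (2). The integral case $Q=1$, together with the reducible $\rho$, I would dispose of by the same scheme: a direct sum of finite-order characters yields components that are $\eta$-powers times level-one forms, hence congruence with bounded denominators, while the indecomposable reducible case is the Kaneko--Zagier situation of \cite{KZ}, where one solution is visibly a character form and the analysis of (\ref{diffeq}) at an integral exponent difference produces the same two alternatives.

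The hard part will be the final step: showing that $Q\leq 5$ forces the components of $F_0$ to be \emph{congruence} modular, and not merely modular on some finite-index subgroup. Here I would use the hypergeometric description of Proposition \ref{prop1}, which expresses $f_1,f_2$ through ${}_2F_1(a,b;c;j^{-1})$ with $a,b,c$ explicit in $m_1,m_2$. The exponent differences of this equation at the three singular points $j=1728,0,\infty$ are $\tfrac12,\tfrac13$ and $P/Q$, so its projective monodromy is the $(2,3,Q)$ triangle group; and $Q\leq 5$ is exactly the condition $\tfrac12+\tfrac13+\tfrac1Q>1$ that makes this group \emph{finite}, i.e.\ one of the spherical groups of Schwarz's list. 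Finite monodromy means that the $f_i$ are algebraic over $\CC(j)$ and hence modular on the finite-index subgroup $\ker\rho$; the delicate remaining point is to upgrade ``finite index'' to ``congruence.'' I expect to settle this by running through the finitely many admissible $(m_1,m_2)$ with $Q\leq 5$, identifying each algebraic solution explicitly and matching it either against the $54$ modular classes isolated in \cite{M2} or against the classification of arithmetic/congruence cases --- equivalently, by an Atkin--Swinnerton-Dyer style argument showing that a genuinely non-congruence solution would after all have unbounded denominators. This congruence verification, rather than any of the reduction steps, is where I expect the real work to lie.
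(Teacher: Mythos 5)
Your handling of the irreducible case tracks the paper's own proof quite closely: reduce the dichotomy for an arbitrary $F$ to the minimal form $F_0$, feed $F_0$ into Theorem \ref{thm1:main}, and convert the $Q\leq 5$ alternative into congruence modularity. (The paper packages the propagation step as Proposition \ref{propRQmod} and Corollary \ref{c2orbdd} rather than quoting \cite{M2}, but the content is the same.) However, there are two genuine gaps. The first is the one you flag yourself: the claim that $Q\leq 5$ forces \emph{congruence} modularity is never actually proved in your proposal, only sketched via Schwarz's list and a hoped-for case check. In the paper this is a one-line citation: Proposition 3.2 of \cite{M2} says that for irreducible $\rho$ with $Q\leq 5$ the kernel of $\rho$ is a congruence subgroup, so every component of every form in $\mathcal{H}(\rho)$ is a congruence modular form. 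Your plan is a plausible route to such a statement, but as written it is a plan, not a proof.

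The second gap is more serious: the reducible case is wrong as stated. By Lemma 4.3 of \cite{MM}, an indecomposable non-irreducible $\rho$ has exponent difference congruent to $\pm 1/6$ or $\pm 5/6$ modulo $1$, i.e.\ $Q=6$ --- \emph{not} an integral exponent difference as you assert; indeed it is exactly the fact that $Q\geq 6$ that lets the paper apply Theorem \ref{thm1:main} to conclude that $F_0$ has unbounded denominators when $DF_0\neq 0$. Moreover, your reduction from $F$ to $F_0$ collapses here: for reducible $\rho$ the set $I$ of bounded-denominator forms is a \emph{nonzero proper} submodule of $\mathcal{H}(\rho)_{\QQ}$, since it contains every ${}^t(f,0)$ with $f$ a rational form on $\Gamma$ with character $\alpha$. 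So the implication ``$F_0$ unbounded $\Rightarrow$ every nonzero rational $F$ unbounded'' is simply \emph{false} in this setting (the paper notes explicitly that the analog of Corollary \ref{c2orbdd} fails), and the propagation result you quote from \cite{M2} does not apply. The paper's fix is an argument by contradiction: if a bounded-denominator $F$ with non-congruence components existed, then $F\notin\frak{M}'=\{{}^t(f,0)\}$, so $I$ would have rank $2$, and then the \emph{proofs} (not the statements) of Proposition \ref{propRQmod} and Corollary \ref{c2orbdd} go through and force $F_0$ itself to have bounded denominators, contradicting Theorem \ref{thm1:main}. Finally, you never address the subcase $DF_0=0$, in which the components of $F_0$ do \emph{not} form a fundamental system of (\ref{diffeq}) and Theorem \ref{thm1:main} cannot be applied to $F_0$ directly; the paper needs a separate twisting argument there (pass to $\rho''=\chi\otimes\rho'$, import an unbounded-denominator form $G$, and return via multiplication by $\eta^{2b}$). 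Without these three repairs the non-irreducible half of the theorem remains unproved.
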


\begin{rmk}\ As we have explained, the components of the vector-valued modular form $F_0$
satisfy the assumptions (hence also the conclusions) of Theorem \ref{thm1:main}.\ However, most
choices of $m_1$ and $m_2$ do \emph{not} correspond to any $\rho$.\ Theorem \ref{thm1:main}
teaches us that
the origin of the unbounded denominator phenomenon is not so much the representation $\rho$,
but rather the differential equation (\ref{diffeq}) to which it is associated.
\end{rmk}

We shall actually prove a much more precise result than is stated in Theorem \ref{thm1:main}.\ If $Q\geq 6$
we will show that at least one of the following statements holds:
\begin{eqnarray}\label{ap}
&&\mbox{a) $m_1>m_2$ and for every prime $p$ in the arithmetic progression $Qn+P$,}\notag\\
&&\mbox{one of the Fourier coefficients of $f_1$ has $p$-adic valuation $-1$;} \\
&&\mbox{b) $m_1<m_2$ and for every prime $p$ in the arithmetic progression $Qn-P$,}\notag\\
&&\mbox{one of the Fourier coefficients of $f_2$ has $p$-adic valuation $-1$;} \notag
\end{eqnarray}

\section{A modular change of variable}
In this Section, $m_1, m_2, k_0, k_1, P, Q$ are assumed to satisfy the conditions stated in
Theorem \ref{thm1:main}.\ 
Let $\eta(q)$ denote Dedekind's eta function
\[
 \eta(q) = q^{1/24}\prod_{n = 1}^\infty(1 - q^n).
\]
\begin{lem}
\label{l:etaderivative}
One has $D_1(\eta^2) = 0$.
\end{lem}
\begin{proof}
Let $\Delta = \eta^{24}$ and recall the well-known identity $q\frac{d\Delta}{dq} = E_2\Delta$. This is equivalent with $D_{12}(\Delta) = 0$. The identity $D_1(\eta^2) = 0$ follows from this by application of the Leibniz rule.
\end{proof}
Let $f$ denote a solution of (\ref{diffeq}). In order to study the $q$-expansion of $f$ we introduce the change of variable $\tf \df f\eta^{-2k_0}$. Lemma \ref{l:etaderivative} shows that $f$ is a solution of (\ref{diffeq}) if and only if $\tf$ satisfies
\begin{equation}
\label{diffeq2}
D_2\circ D_0 \tf - k_1E_4\tf = 0.
\end{equation}
We will show that this is a hypergeometric equation when expressed in terms of the local parameter $j^{-1}$ at infinity. To begin, let $\theta = q\frac{d}{dq}$. The differential equation (\ref{diffeq2}) is equivalent with the equation
\begin{equation}
\label{diffeq3}
\theta^2(\tf) -\frac{1}{6}E_2\theta(\tf) - k_1E_4 \tf = 0.
\end{equation}
We will reexpress equation (\ref{diffeq3}) in terms of $J : = j/1728$. Note that
\begin{align*}
  \frac{j}{j - 1728} &= \frac{E_4^3}{E_6^2},\\
  \frac{d\tf}{dj} &= -\frac{E_4}{jE_6}\theta(\tf),\\
  \frac{d^2\tf}{dj^2} &= \left(\frac{E_4}{jE_6}\right)^2\left(\theta^2(\tf)-\frac{E_2}{6}\theta(\tf)\right) - \left(\frac{7j-4\cdot 1728}{6j(j-1728)}\right)\frac{d\tf}{dj}.
\end{align*}
Then  (\ref{diffeq3}) becomes
\[
j(j-1728)\frac{d^2\tf}{dj^2} +\frac{7j-4.1728}{6}\frac{d\tf}{dj}  - k_1 \tf = 0,
\]
which is equivalent to the Gauss normal form
\begin{equation}
\label{eq:hggen}
J(1-J)\frac{d^2\tf}{dJ^2} +\left(\frac{4-7J}{6}\right)\frac{d\tf}{dJ} + k_1 \tf = 0.
\end{equation}
(Here, and below, we write $J=j/1728$.)\
The general Gauss normal form is expressed in terms of parameters $a$, $b$ and $c$ as
\[
  J(1-J)\frac{d^2\tf}{dJ^2} + (c -(a + b + 1)J)\frac{d\tf}{dJ} - ab\tf = 0.
\]
This corresponds to (\ref{eq:hggen}) when
\begin{eqnarray}\label{abcdef}
  a = \frac{1}{12} + \left(\frac{m_1-m_2}{2}\right),\quad\ b= \frac{1}{12} - \left(\frac{m_1-m_2}{2}\right),\quad
   c= \frac{2}{3}.
\end{eqnarray}
Observe that $a-b = m_1 - m_2$ is \emph{not} an integer (because $Q\geq 2$).\ Thus (\ref{eq:hggen}) has two independent solutions at $J = \infty$ given by
\[
J^{-a}F(a,1+a-c;1+a-b;J^{-1}),\quad J^{-b}F(b, 1 +b-c; 1+b-a;J^{-1}),
\]
where $F(a,b;c;z)$ is Gauss's hypergeometric function
\begin{eqnarray}\label{hgs}
  F(a,b;c;z) \df 1+\sum_{n \geq 1}\frac{(a)_n(b)_n}{(c)_n(1)_n} z^n.
\end{eqnarray}
This proves the following.
\begin{prop}
\label{prop1}
Assume that $m_1, m_2, k_0, k_1, P, Q$ are as in the statement of Theorem \ref{thm1:main}.\
Then (\ref{diffeq}) has two linearly independent solutions given by the series
\begin{eqnarray*}
f_1&=&  \eta^{2k_0}J^{-a}F(a,1+a-c;1+a-b;J^{-1}),\\
 f_2&=&\eta^{2k_0}J^{-b}F(b, 1 +b-c; 1+b-a;J^{-1}),
\end{eqnarray*}
where $a, b, c$ are as in (\ref{abcdef}). $\hfill \Box$
\end{prop}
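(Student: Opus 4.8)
The plan is to assemble the chain of equivalent differential equations developed above into a single argument, and then to read off the solutions of the resulting hypergeometric equation at $J=\infty$ from classical theory.

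The first step is to justify the passage from (\ref{diffeq}) to the equation (\ref{diffeq2}) for $\tf = f\eta^{-2k_0}$. The operators $D_k$ satisfy the Leibniz rule $D_{r+s}(gh) = (D_r g)h + g(D_s h)$ whenever $g$ and $h$ are assigned weights $r$ and $s$; together with the power rule $D_n((\eta^2)^n) = n(\eta^2)^{n-1}D_1(\eta^2)$ and Lemma \ref{l:etaderivative}, this gives $D_{k_0}(\eta^{2k_0}) = 0$. Writing $f = \eta^{2k_0}\tf$ and applying the Leibniz rule twice (with $\tf$ of weight $0$) then shows $D_{k_0+2}D_{k_0}f = \eta^{2k_0}\,D_2 D_0\tf$, so that (\ref{diffeq}) is equivalent to (\ref{diffeq2}) after cancelling the nonvanishing factor $\eta^{2k_0}$. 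Expanding $D_0\tf = \theta(\tf)$ and $D_2(\theta\tf) = \theta^2\tf - \tfrac{1}{6}E_2\theta\tf$ converts (\ref{diffeq2}) into the form (\ref{diffeq3}).

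The substantive step, which I expect to be the main obstacle, is the modular change of variable from $q$ to $J = j/1728$ carrying (\ref{diffeq3}) into the Gauss normal form (\ref{eq:hggen}). Here I would use Ramanujan's identities $\theta(E_2) = (E_2^2 - E_4)/12$, $\theta(E_4) = (E_2 E_4 - E_6)/3$, $\theta(E_6) = (E_2 E_6 - E_4^2)/2$, together with $\theta(\Delta) = E_2\Delta$, to compute $\theta(j) = -jE_6/E_4$ and thence the two derivative formulas for $d\tf/dj$ and $d^2\tf/dj^2$ recorded above. The delicate point is that every occurrence of the quasimodular $E_2$ must cancel, so that the transformed operator has coefficients that are genuine rational functions of $j$; once the second-derivative relation is substituted into (\ref{diffeq3}) and the factor $(E_4/jE_6)^2$ is cleared using $j/(j-1728) = E_4^3/E_6^2$, this cancellation occurs and one obtains (\ref{eq:hggen}) after setting $J = j/1728$.

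It then remains to identify the hypergeometric parameters and solve. Matching (\ref{eq:hggen}) against the general Gauss form forces $c = 2/3$, $a + b + 1 = 7/6$, and $-ab = k_1$; solving the first two and using (\ref{kdef}) gives the values in (\ref{abcdef}), and one verifies directly that $ab = 1/144 - (m_1-m_2)^2/4 = -k_1$, so the system is consistent. Since the difference $a - b = m_1 - m_2 = P/Q$ of the local exponents at the regular singular point $J = \infty$ is not an integer (because $Q\geq 2$), the two Frobenius solutions of (\ref{eq:hggen}) there are the standard hypergeometric series $J^{-a}F(a,1+a-c;1+a-b;J^{-1})$ and $J^{-b}F(b,1+b-c;1+b-a;J^{-1})$, with $F$ as in (\ref{hgs}); these are linearly independent because their leading exponents $-a$ and $-b$ at $J=\infty$ differ. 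Multiplying each by $\eta^{2k_0}$ inverts the substitution $\tf = f\eta^{-2k_0}$, an operation that preserves linear independence, and yields the two linearly independent solutions $f_1$ and $f_2$ of (\ref{diffeq}) asserted in the Proposition.
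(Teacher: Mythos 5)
Your proposal is correct and follows essentially the same route as the paper: the $\eta^{2k_0}$ conjugation via Lemma \ref{l:etaderivative}, the passage to the $\theta$-form (\ref{diffeq3}), the change of variable to $J=j/1728$ yielding the Gauss normal form (\ref{eq:hggen}), the parameter matching (\ref{abcdef}) (your explicit check that $ab=-k_1$ is a detail the paper leaves implicit), and the standard Frobenius solutions at $J=\infty$, valid because $a-b=m_1-m_2$ is not an integer. The asserted identities $\theta(j)=-jE_6/E_4$, the two derivative formulas, and $j/(j-1728)=E_4^3/E_6^2$ are all correct, so no gap remains.
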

\begin{rmk}
  The lowest terms in the $q$-expansions of $f_1$ and $f_2$ have exponents $m_2$ and $m_1$ respectively. Also, the $q$-expansions of $f_1$ and $f_2$ are rational save for possibly up to a fractional power of $12$ arising from the terms $J^{-a}$ and $J^{-b}$.
\end{rmk}

Let us consider  the first of these series, in particular the coefficients
given by the corresponding Pochhammer symbols occurring in (\ref{hgs}).\ 
For $n \geq 1$, the coefficients in question are
\begin{align}\label{phrat1}
C_n:=  \frac{(a)_n(1+a-c)_n}{(1+a-b)_n(1)_n}
= (144Q)^{-n}\prod_{k = 0}^{n-1} \frac{(12Qk +Q+ 6P)(12Qk + 5Q + 6P)}{(Qk+Q + P)(k+1)}.
\end{align}

\medskip
Suppose that $p=Qn+P$ is a \emph{prime} that divides
$(12Qk+Q+6P)(12Qk+5Q+6P)$ for some $k$ in the range $0\leq k \leq n-1\ (n\geq 1)$.\
If $p$ divides the first factor then it divides $12Qk+Q+6P-6(Qn+P)=Q(12k+1-6n)$, 
so that
\begin{eqnarray*}
Qn+P|12k+1-6n \leq 6n-11 \Rightarrow n(Q-6)\leq -(P+11).
\end{eqnarray*}
Similarly, if $p$ divides the second
factor then
\begin{eqnarray*}
Qn+P|12k+5-6n\leq 6n-7\Rightarrow n(Q-6)\leq -(P+7).
\end{eqnarray*}
In particular, if $P$ is positive (i.e.\ $m_1>m_2$) and $Q\geq 6$ then neither of these conditions can hold, so 
 $p$ cannot divide the numerator of $C_n$.\ It is then evident that the $p$-adic valuation of $C_n$ is exactly $-1$.
For the second hypergeometric series we consider the coefficients 
 \begin{align}\label{phrat2}
C'_n:=  \frac{(a)_n(1+b-c)_n}{(1+b-a)_n(1)_n}
= (144Q)^{-n}\prod_{k = 0}^{n-1} \frac{(12Qk +Q- 6P)(12Qk + 5Q - 6P)}{(Qk+Q - P)(k+1)}.
\end{align}
We easily find results similar to those obtained in the first case, but now for primes $Qn-P$ and $P<0$.\
Combining these results yields a proof of the following lemma.
 \begin{lem}\label{lemQ>5} Suppose that $Q\geq 6$.\ Then one of the following holds: 
\begin{enumerate}
\item[(a)] $m_1>m_2$ and every prime $p=Qn+P$ is such that the $p$-adic valuation of $C_n$ is $-1$;
\item[(b)] $m_2>m_1$ and every prime $p=Qn-P$ is such that the $p$-adic valuation
 of $C'_n$ is $-1$. $\hfill \Box$
\end{enumerate}
\end{lem}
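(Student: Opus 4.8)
The plan is to read the $p$-adic valuation of $C_n$ (respectively $C'_n$) straight off the product \eqref{phrat1} (respectively \eqref{phrat2}), using the hypothesis $Q\geq 6$ to guarantee that the prime $p$ never enters the numerator. Since $\gcd(P,Q)=1$ and $Q\geq 2$ force $P\neq 0$, exactly one of the two cases occurs, according to the sign of $P$, and they are mirror images of one another; I will carry out case (a), where $m_1>m_2$ and hence $P>0$, and case (b) then follows by the identical computation applied to \eqref{phrat2} with the progression $Qn-P$ in place of $Qn+P$.

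Fix $n\geq 1$ for which $p=Qn+P$ is prime; then $p\geq Q+P\geq 7$. Because $\gcd(p,Q)=\gcd(P,Q)=1$ and $p\geq 7$ is coprime to $144=2^4\cdot 3^2$, the prefactor $(144Q)^{-n}$ contributes nothing to $v_p(C_n)$. For the numerator, note that since $p\nmid Q$, divisibility of $12Qk+Q+6P$ (respectively $12Qk+5Q+6P$) by $p$ is equivalent to divisibility of $12k+1-6n$ (respectively $12k+5-6n$) by $p$, exactly as computed above. These integers are odd, hence nonzero, and throughout $0\leq k\leq n-1$ their absolute values are at most $6n-1$; since $6n-1<Qn+P=p$, none of them can be a multiple of $p$. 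Therefore $v_p$ of the numerator of $C_n$ is $0$.

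It then remains only to pin down the denominator valuation. Writing $Qk+Q+P=Q(k+1)+P$ and reducing modulo $p$, where $P\equiv -Qn$, shows that this factor vanishes precisely when $Q(k+1-n)\equiv 0$, i.e.\ (using $p\nmid Q$) when $k\equiv n-1\pmod p$; in the range $0\leq k\leq n-1<p$ this occurs only at $k=n-1$, where the factor equals $Qn+P=p$ and so contributes valuation exactly $1$. The remaining factors $k+1$ run through $1,\dots,n<p$ and contribute nothing. Hence $v_p(C_n)=0-1=-1$, which is statement (a); statement (b) follows in the same way for $C'_n$ and the primes $Qn-P$.

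I expect the only real subtlety to lie in the numerator estimate, specifically the inequality $6n-1<Qn+P$ that rules $p$ out of the numerator: this is exactly where $Q\geq 6$ is indispensable, since it forces $n(Q-6)+P+1>0$ for every $n\geq 1$. If instead $Q\leq 5$, this estimate can fail, $p$ may divide a numerator factor and cancel the lone power of $p$ in the denominator, and the valuation need no longer equal $-1$ --- precisely the threshold reflected in Theorem \ref{thm1:main}. The one routine check supporting the estimate is the parity observation that $12k+1-6n$ and $12k+5-6n$ are odd, hence never zero; this is what guarantees that divisibility by $p$ would force their absolute values to be at least $p$, contradicting the bound just used.
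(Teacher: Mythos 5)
Your proposal is correct and follows essentially the same route as the paper: reduce each numerator factor modulo $p=Qn+P$ to $Q(12k+1-6n)$ or $Q(12k+5-6n)$, use $Q\geq 6$ to see these are too small in absolute value to be divisible by $p$, and observe that the denominator contributes exactly one factor of $p$, at $k=n-1$. If anything, your treatment is slightly more careful than the paper's (which bounds $12k+1-6n\leq 6n-11$ without addressing negative values, and dismisses the denominator count as ``evident''), since you use absolute values and make the single occurrence of $p$ in the denominator explicit.
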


\begin{prop}\label{prop2}Let $m_1, m_2, k_0, k_1, P, Q$ be as in the statement of Theorem \ref{thm1:main}, and
assume further that $Q\geq 6$.\ Let $f_1, f_2$ be the two $q$-expansions in Proposition \ref{prop1}.\ Then one of the following holds.

\begin{enumerate}
\item[(a)] $m_1>m_2$ and for every prime $p$ in the arithmetic progression $Qn+P$ there is at least one Fourier coefficient of $f_1$
 that has $p$-adic valuation  $-1$;
\item[(b)] $m_2>m_1$ and for every prime $p$ in the arithmetic progression $Qn-P$ there is at least one Fourier coefficient of $f_2$
 that has $p$-adic valuation  $-1$.
\end{enumerate}
 \end{prop}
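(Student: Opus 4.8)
The plan is to propagate the valuation information of Lemma~\ref{lemQ>5} from the expansion of the hypergeometric factor in powers of $J^{-1}$ to the genuine $q$-expansion of $f_1$. I would treat case (a), where $m_1>m_2$, $P\geq 1$, and $p=Qn+P$ is prime; case (b) follows verbatim upon replacing $C_n,P,f_1$ by $C_n',-P,f_2$. Write $v_p$ for the $p$-adic valuation. Since $Q\geq 6$ and $P\geq1$ every such prime satisfies $p=Qn+P\geq 7$, and $p\nmid Q$ (otherwise $p\mid Qn+P$ would give $p\mid P$, contradicting $\gcd(P,Q)=1$); hence $p\nmid144Q$ and $p>n$. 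I would first sharpen Lemma~\ref{lemQ>5}(a) to the statement that $v_p(C_m)\geq 0$ for every $1\leq m<n$ as well. Indeed, by~(\ref{phrat1}) the only factors that can introduce $p$ into the denominator of $C_m$ are $144Q$ and the $(Qk+Q+P)$, $(k+1)$ for $0\leq k\leq m-1$; but $p\nmid144Q$, while $k+1\leq m\leq n-1<p$ excludes $p\mid(k+1)$, and $p\mid(Q(k+1)+P)$ would force $k\equiv n-1\pmod p$ (using $p=Qn+P$ and $p\nmid Q$), which is impossible for $0\leq k\leq n-2<p$.

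Next I would compute the $q$-expansion of $H:=F(a,1+a-c;1+a-b;J^{-1})=\sum_{m\geq0}C_mJ^{-m}$. Because $j$ has integral $q$-coefficients and $\Delta=\eta^{24}\in q\ZZ[[q]]$ has leading coefficient $1$, one has $J^{-1}=1728/j\in q\ZZ[[q]]$ with leading coefficient $1728$, so $(J^{-1})^m\in q^m\ZZ[[q]]$ has leading coefficient $1728^m$. Writing $H=\sum_{n\geq0}B_nq^n$, for $n\geq1$ this gives $B_n=\sum_{m=1}^{n}C_m e_{m,n}$ with $e_{m,n}\in\ZZ$ and $e_{n,n}=1728^n$. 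The diagonal term satisfies $v_p(C_n1728^n)=v_p(C_n)=-1$ since $p\geq7$, whereas each term with $m<n$ has $v_p(C_m e_{m,n})\geq0$ by the estimate just established; the ultrametric inequality then yields $v_p(B_n)=-1$, and likewise $v_p(B_j)\geq 0$ for every $j<n$.

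It remains to restore the prefactor $\eta^{2k_0}J^{-a}$. Factoring $\eta^{2k_0}=q^{k_0/12}\prod_{l\geq1}(1-q^l)^{2k_0}$ and $J^{-a}=12^{3a}q^{a}(qj)^{-a}$, and using $k_0/12+a=m_1$, one gets $f_1=12^{3a}q^{m_1}GH$ with $G:=\prod_{l\geq1}(1-q^l)^{2k_0}(qj)^{-a}\in1+q\QQ[[q]]$. The coefficient $g_l$ of $q^l$ in $G$ is a polynomial in $a$ of degree $\leq l$ whose rational coefficients have denominators divisible only by primes $\leq l$; since $a=\tfrac1{12}+\tfrac{P}{2Q}$ is $p$-integral and $l\leq n<p$, every $g_l$ with $l\leq n$ is $p$-integral. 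Therefore, in $\gamma_n=\sum_{j=0}^{n}g_{n-j}B_j$, the term $g_0B_n=B_n$ has valuation $-1$ while each remaining term $g_{n-j}B_j$ with $j<n$ is $p$-integral—$g_{n-j}$ because $n-j<p$, and $B_j$ by the previous paragraph—so the ultrametric inequality gives $v_p(\gamma_n)=-1$. As $12^{3a}$ is a fixed power of $12$ and so leaves valuations at $p\geq7$ unchanged, the corresponding Fourier coefficient of $f_1$ has $p$-adic valuation exactly $-1$, which is~(a); case (b) is identical.

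The crux is the incorporation of the prefactor together with the change of variable: Lemma~\ref{lemQ>5} controls the coefficients relative to the uniformizer $J^{-1}$, whereas the Proposition concerns the Fourier coefficients relative to $q$, and both the substitution $J^{-1}=1728q+\cdots$ and the multiplicative factor $\eta^{2k_0}J^{-a}$ entangle infinitely many of the $C_m$. What saves the argument is that $p$ is necessarily large—strictly larger than the index $n$ and coprime to $12Q$—so that every lower hypergeometric coefficient $C_m$ ($m<n$) and every relevant coefficient $g_l$ ($l\leq n$) of the prefactor is $p$-integral, whereupon the strong triangle inequality isolates the unique surviving term of valuation $-1$. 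I expect that verifying the $p$-integrality of the coefficients of $(qj)^{-a}$ for indices below $p$ will be the most delicate point.
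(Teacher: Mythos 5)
Your proof is correct and takes essentially the same route as the paper's: both transfer the valuation $v_p(C_n)=-1$ from Lemma \ref{lemQ>5} to the $q$-expansion by expanding the hypergeometric factor in powers of $J^{-1}\in 1728q+q^2\ZZ[[q]]$, checking that every other ingredient (the $C_m$ with $m<n$, the integral coefficients of the $\eta$-product and of powers of $J^{-1}$, and the coefficients of the fractional power of $j$) is $p$-integral, and invoking the ultrametric inequality. The only cosmetic differences are that you argue directly, pinning down the coefficient of $q^{m_1+n}$ as the one of valuation $-1$, where the paper assumes all coefficients of $f_1$ are $p$-integral and derives a contradiction from the relation $\eta^{-2k_0}j^{a}f_1 = 1+\sum_m c_mq^m$, and that you justify $p$-integrality of the coefficients of $(qj)^{-a}$ below index $p$ by a polynomial-degree argument where the paper simply asserts that the denominators of the coefficients of $j^{a}$ involve only primes dividing $12Q$.
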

 \begin{proof} Suppose that $m_1>m_2$, and fix a prime $p=Qn+P\ (n\geq 1)$.\ We have
 \begin{eqnarray}\label{jform}
\eta^{-2k_0}J^af_1 = 1728^{-a}\left(1+\sum_{n\geq 1} (12)^{3n}C_nj^{-n}\right) =:1728^{-a} \left(1+\sum_{n\geq 1} c_nq^n\right),
\end{eqnarray}
and by part (a) of Lemma \ref{lemQ>5} the $p$-adic valuation of $C_n$ is $-1$.\ It follows that the $p$-adic valuation of $c_n$ is $-1$ while that of $c_m$
is \emph{nonnegative} for $1\leq m\leq n-1$.

\medskip
If all  Fourier coefficients of $f_1$ have nonnegative $p$-adic valuation, the same is true of $\eta^{-2k_0}j^af_1$.\ This is
because the $\eta$-power has integral coefficients, while the only primes occurring in the denominators of coefficients of
$j^a = 1728^aJ^{a}$ divide $12Q$ (cf.\ (\ref{abcdef})), which is coprime to $p$.\ This contradicts the previous paragraph, and thus shows that some Fourier coefficient
of $f_1$ has negative $p$-adic valuation.\ A similar argument shows that the first such coefficient has $p$-adic valuation exactly $-1$,
because that is true of the coefficients $c_m$.\ This completes the proof of the Proposition in case $m_1>m_2$.\
The proof in the case $m_2>m_1$ is completely parallel. 
\end{proof}

Notice that both (\ref{ap}) and Theorem \ref{thm1:main} are consequences of Proposition \ref{prop2}.

\section{Vector-valued modular forms}
This Section is devoted to the proof of Theorem \ref{thm2:main}.\ We first develop some general results concerning vector-valued modular forms
whose components have rational Fourier coefficients.\ We use the following additional notation:
\begin{enumerate}
\item[---] $\frak{M} $ is the algebra of (classical) holomorphic modular forms on $\Gamma$.
\item[---] $\frak{M}_{\QQ}$ is the $\QQ$-algebra of holomorphic modular forms with Fourier coefficients in $\QQ$.
\item[---] $\frak{p} = \frak{M}_{\QQ}\Delta$ is the principal ideal of $\frak{M}_{\QQ}$ generated by the discriminant $\Delta$.
\item[---]$\rho:\Gamma \rightarrow \GL_n(\CC)$ is an $n$-dimensional  representation of $\Gamma$ such that $\rho(T)$ is (similar to) a unitary matrix.
\item[---] $\mathcal{H}(\rho)$ is  the $\ZZ$-graded space of holomorphic vector-valued modular forms associated to $\rho$; it is a free $\frak{M}$-module of rank $n$ (\cite{MM}).
\item[---] $\mathcal{H}(\rho)_{\QQ}$ is the space of vector-valued modular forms in $\mathcal{H}(\rho)$,
 all of whose component functions have Fourier coefficients in $\QQ$; it is a module over $\frak{M}_{\QQ}$.
\item[---] $\frak{R}$ is the (noncommutative) polynomial ring $\frak{M}[d]$ such that
 $df-fd=D(f)\ (f\in\frak{M});$ $\mathcal{H}(\rho)$ is a left $\frak{R}$-module where 
 $f\in \frak{M}$ acts as multiplication by $f$ and $d$ acts as $D$.\ Similarly, 
 $\frak{R}_{\QQ} = \frak{M}_{\QQ}[d]$ and $\mathcal{H}(\rho)_{\QQ}$ is
 a left $\frak{R}_{\QQ}$-module (\cite{M1}).
\end{enumerate}

\bigskip
 The next result is  technical, but very useful.
 \begin{prop}\label{propRQmod} Assume that $\rho$ is \emph{irreducible}.\ If $I \subseteq \mathcal{H}(\rho)_{\QQ}$ is a 
 \emph{nonzero} $\frak{R}_{\QQ}$-submodule, then there is an integer $r$ such that $\frak{p}^r\mathcal{H}(\rho)_{\QQ} \subseteq I$. In other words, $\frak{p}^r$
 \emph{annihilates} $\mathcal{H}(\rho)_{\QQ}/I$.
 \end{prop}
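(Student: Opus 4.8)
The plan is to produce a single nonzero ideal $\frak a\subseteq\frak M_\QQ$ that is stable under the Serre derivation $D$, that satisfies $\frak aM\subseteq I$ (I abbreviate $M\df\mathcal H(\rho)_\QQ$), and whose radical contains $\Delta$. Granting this, $\Delta^r\in\frak a$ for some $r$, whence $\frak p^rM=\Delta^rM\subseteq\frak aM\subseteq I$, which is the assertion. Throughout I exploit the rational cyclic structure of $M$: by irreducibility of $\rho$ together with \cite{MM} and \cite{M1}, $M$ is free over $\frak M_\QQ$ with basis $F_0,DF_0,\dots,D^{n-1}F_0$, where $F_0$ is the normalized minimal-weight form, and $M=\frak R_\QQ F_0$ is cyclic. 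I also take $I$ to be homogeneous; this is the natural setting, and some such hypothesis is genuinely needed, since for $c\in\QQ^\times$ the submodule $(\Delta-c)M$ is $\frak R_\QQ$-stable yet contains no $\Delta^rM$.

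First I would verify that $I$ has full rank $n$ over $K\df\operatorname{Frac}\frak M_\QQ$. This is where irreducibility really enters: the components of $F_0$ form a fundamental system of the monic operator $L=D^n+\sum_{i<n}g_iD^i$ with $g_i\in\frak M$, and $L$ is irreducible as a differential operator precisely because $\rho$ is irreducible, so $M\otimes_{\frak M_\QQ}K$ is a simple differential module. Hence any nonzero $d$-stable submodule, in particular $I$, spans it over $K$; equivalently any $0\ne G\in I$ is a cyclic vector and $G,DG,\dots,D^{n-1}G$ are $K$-independent. I expect this to be the main obstacle, since it is the representation-theoretic input and relies on the structure theory of \cite{M1}.

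With full rank in hand I would pass to top exterior powers. Set $e\df F_0\wedge DF_0\wedge\cdots\wedge D^{n-1}F_0$, a generator of the rank-one free module $\wedge^nM\cong\frak M_\QQ$. Because $g_{n-1}$ is a modular form of weight $2$ and $(\frak M_\QQ)_2=0$, a direct computation gives $De=g_{n-1}e=0$, so $e$ is $d$-invariant. Let $\frak a\subseteq\frak M_\QQ$ be the ideal generated by the coefficients $a$ for which $G_1\wedge\cdots\wedge G_n=ae$ with $G_i\in I$. Then $\frak a\ne0$ by full rank; Cramer's rule, applied to the matrix $W$ expressing such a tuple in the basis $\{D^jF_0\}$ (so $a=\det W$), gives $a\,D^jF_0=\sum_i(\operatorname{adj}W)_{ji}G_i\in I$, hence $\frak aM\subseteq I$; and since each $DG_i\in I$ while $De=0$, expanding $D(G_1\wedge\cdots\wedge G_n)$ shows $Da\in\frak a$, so $\frak a$ is $D$-stable. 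As $I$ is homogeneous, $\frak a$ is homogeneous.

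It remains to show that a nonzero homogeneous $D$-stable ideal $\frak a$ has $\Delta\in\sqrt{\frak a}$. Here I would replace $\frak a$ by its radical (still $D$-stable in characteristic zero) and use that the minimal primes over a homogeneous differential ideal are again homogeneous and $D$-stable. The homogeneous $D$-stable primes of $\frak M_\QQ=\QQ[E_4,E_6]$ are easily classified: the irrelevant ideal $(E_4,E_6)$, and height-one primes $(\pi)$ with $\pi$ irreducible homogeneous and $D\pi\in(\pi)$. For such $\pi$ one has $D\pi=h\pi$ with $h$ homogeneous of weight $2$, forcing $h=0$ and $D\pi=0$; using $DE_4=-E_6/3$, $DE_6=-E_4^2/2$ and $D\Delta=0$ one checks that the only irreducible homogeneous solution of $D\pi=0$ is $\pi=\Delta$ up to scalar. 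Thus every minimal prime over $\frak a$ is $(E_4,E_6)$ or $(\Delta)$, each containing $\Delta$; therefore $\Delta\in\sqrt{\frak a}$, and the proof concludes as in the first paragraph.
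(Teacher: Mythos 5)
Your architecture is sound, and where it is complete it genuinely departs from the paper. In the middle step the paper does not use exterior powers: it takes $A=\Ann_{\frak{M}_{\QQ}}\bigl(\mathcal{H}(\rho)_{\QQ}/I\bigr)$ and shows $A\neq 0$ by proving the quotient is a torsion module (if some $G$ had zero annihilator, the preimage of the free rank-one module it generates would split off, producing a submodule of rank $n+1$ inside a rank-$n$ module), whereas you build the determinant ideal $\frak{a}$ from top wedges and Cramer's rule. In the final step the paper simply cites Lemmas 2.6 and 2.7 of \cite{MM} for the fact that a nonzero graded $\frak{R}_{\QQ}$-stable ideal of $\frak{M}_{\QQ}$ contains some $\frak{p}^r$; your classification of the homogeneous $D$-stable primes of $\QQ[E_4,E_6]$ via $DE_4=-E_6/3$, $DE_6=-E_4^2/2$ is a correct, self-contained replacement for that citation. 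Your homogeneity remark is also a genuine catch: $(\Delta-c)\mathcal{H}(\rho)_{\QQ}$ is indeed a nonzero $\frak{R}_{\QQ}$-submodule containing no $\frak{p}^r\mathcal{H}(\rho)_{\QQ}$, and the paper uses gradedness of $I$ implicitly (its assertion that $A$ is a graded ideal needs it; in the application to the bounded-denominator submodule, gradedness can be supplied by a short extra argument using the fact that $\theta=q\,d/dq$ preserves bounded denominators, so that $\theta F-DF=\tfrac{1}{12}E_2\sum_k kF_k$ lies in $I$ along with $F$).

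Two of your steps, however, are assertions rather than proofs. First, the full-rank claim is made to rest on ``$L$ is irreducible as a differential operator precisely because $\rho$ is irreducible, so $M\otimes K$ is a simple differential module.'' That equivalence is exactly the representation-theoretic content that has to be established, and you leave it unproved (flagging it yourself as the main obstacle); it would require setting up the monodromy correspondence. The paper's argument here is elementary and short: if $F,DF,\dots,D^{n-1}F$ were dependent over $\frak{M}_{\QQ}$, then all $n$ component functions of $F$ would satisfy a single modular linear ODE of order $\le n-1$, hence be linearly dependent over $\CC$; and a nontrivial relation $\sum_i c_if_i=0$ propagates under the slash action ($c\,\rho(\gamma)F=(cF)|_k\gamma=0$) to a proper nonzero invariant subspace, contradicting irreducibility. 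You could simply substitute this. Second, you assume $\mathcal{H}(\rho)_{\QQ}$ is free over $\frak{M}_{\QQ}$ with basis $F_0,DF_0,\dots,D^{n-1}F_0$. That cyclicity is available from \cite{M1} when $n=2$, but the Proposition is stated for irreducible $\rho$ of arbitrary dimension $n$, and neither \cite{M1} nor \cite{MM} provides cyclicity---nor even freeness of the rational structure $\mathcal{H}(\rho)_{\QQ}$ as opposed to $\mathcal{H}(\rho)$---in that generality. Your wedge construction really needs a free basis (any homogeneous one would do, since then $\wedge^n\mathcal{H}(\rho)_{\QQ}\cong\frak{M}_{\QQ}$ and its generator is killed by $D$ because $(\frak{M}_{\QQ})_2=0$), while the paper's annihilator-plus-torsion route needs only that $\mathcal{H}(\rho)_{\QQ}$ has rank $n$; that is why the paper's proof covers all $n$ and yours, as written, covers $n=2$.
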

\begin{proof} Let $A = \Ann_{\frak{M}_{\QQ}}(\mathcal{H}(\rho)_{\QQ}/I)$. It is an ideal in $\frak{M}_{\QQ}$, and we have to show that
$\frak{p}^r \subseteq A$ for some $r$.

\medskip
Choose any nonzero $F \in I$, and consider the vector-valued modular forms $D^jF\ (0 \leq j \leq n-1)$. If they are linearly dependent over
$\frak{M}_{\QQ}$ then  the components of $F$ satisfy a modular linear differential equation of order $\leq n-1$, and hence are linearly dependent. Because $\rho$ is irreducible this is not possible. It follows that the $D^jF$ span an $\frak{M}_{\QQ}$-submodule of $\mathcal{H}(\rho)_{\QQ}$ of (maximal) rank $n$. So  $I$ is also a submodule of maximal rank $n$.

\medskip
We claim that $\mathcal{H}(\rho)_{\QQ}/I$ is a \emph{torsion} $\frak{M}_{\QQ}$-module.
If not, we can find $G \in \mathcal{H}(\rho)_{\QQ}/I$ such that the annhilator of $G$ in $\mathcal{M}_{\QQ}$ reduces to $0$. Then the submodule generated by $G$ is a rank $1$ free module, call it $J$, and there is a short exact sequence 
\begin{eqnarray*}
0 \rightarrow I \rightarrow K \rightarrow J \rightarrow 0
\end{eqnarray*}
of $\mathcal{M}_{\QQ}$-modules. Because $J$ is free the sequence splits and we obtain $K \cong I \oplus J$ which is free of rank $n+1$. This is not possible because $\mathcal{H}(\rho)_{\QQ}$ has rank $n$, whence no submodule has rank  greater than $n$.\ This proves the claim.

\medskip
Because $\mathcal{H}(\rho)_{\QQ}/I$ is a torsion module and $\frak{M}_{\QQ}$ a domain, it follows that $A$ is \emph{nonzero}. It is also easy to see that $A$ is a graded ideal of $\frak{M}_{\QQ}$.\ Furthermore, because $I$ is a left $\frak{R}_{\QQ}$-submodule
and $D$ is a derivation, an easy calculation shows that $d$ (aka $D$) leaves
$A$ invariant.\ Hence, $A$ is a nonzero, graded, left $\frak{R}_{\QQ}$-submodule
of $\frak{M}_{\QQ}$.

\medskip
 In Lemmas 2.6 and 2.7 of
\cite{MM} it was  proved that a nonzero graded $\frak{R}$-submodule of $\frak{M}$ contains 
$\frak{M}\Delta^r$ for some $r$.\ A check of the proof shows that it still works if the base field $\CC$ is replaced by $\QQ$ and $\frak{M}\Delta$ is replaced by
$\frak{p}$. So $\frak{p}^r \subseteq A$ for some $r$, as required.\  This completes the proof of the Proposition. 
\end{proof}

\begin{cor}\label{c2orbdd} Assume that $\rho$ is irreducible.\ Then the following are equivalent:
\begin{enumerate}
\item[(a)] $\mathcal{H}(\rho)_{\QQ}$ contains at least one nonzero vector-valued modular form
with bounded denominators;
\item[(b)]\emph{Every} nonzero element in $\mathcal{H}(\rho)_{\QQ}$ has bounded denominators.
\end{enumerate}
\end{cor}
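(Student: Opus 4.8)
The plan is to prove the nontrivial implication (a) $\Rightarrow$ (b) by exhibiting the bounded-denominator forms as a left $\frak{R}_{\QQ}$-submodule and then invoking Proposition \ref{propRQmod}. The reverse implication (b) $\Rightarrow$ (a) is immediate once one observes that $\mathcal{H}(\rho)_{\QQ} \neq 0$ (for instance, the normalized minimal-weight form $F_0$ has rational Fourier coefficients), so that a nonzero element exists and, by (b), has bounded denominators.

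For (a) $\Rightarrow$ (b), let $B \subseteq \mathcal{H}(\rho)_{\QQ}$ denote the set of all vector-valued modular forms whose components have bounded denominators. First I would verify that $B$ is a left $\frak{R}_{\QQ}$-submodule. Closure under $\frak{M}_{\QQ}$ is clear because $\frak{M}_{\QQ} = \QQ[E_4, E_6]$ while $E_4, E_6$ have integral Fourier coefficients, so every element of $\frak{M}_{\QQ}$ has bounded denominators, and the product of two $q$-expansions with bounded denominators again has bounded denominators. Closure under the action of $d = D$ requires a short computation: writing $D = \theta - \frac{k}{12}E_2$ on the weight-$k$ graded piece, the series $E_2$ has integral coefficients and the rational factor $\frac{k}{12}$ only enlarges denominators by a factor of $12$, while the Euler operator $\theta = q\frac{d}{dq}$ multiplies the coefficient of $q^{m_i+n}$ by $m_i + n = (P_i + Q_i n)/Q_i$, so that $N a_n \in \ZZ$ forces $N Q_i (m_i + n) a_n \in \ZZ$. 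Hence $DF$ again lies in $B$. This is the step I expect to demand the most care, since it is precisely where the rationality of the exponents $m_i$ (hence the cited result of \cite{MA}) is used.

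By hypothesis (a) the submodule $B$ is nonzero, so Proposition \ref{propRQmod} supplies an integer $r$ with $\frak{p}^r \mathcal{H}(\rho)_{\QQ} \subseteq B$. Since $\frak{p}^r = \frak{M}_{\QQ}\Delta^r$ and $1 \in \frak{M}_{\QQ}$, this yields $\Delta^r \mathcal{H}(\rho)_{\QQ} \subseteq B$; that is, $\Delta^r G$ has bounded denominators for every $G \in \mathcal{H}(\rho)_{\QQ}$.

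It then remains to descend from $\Delta^r G$ back to $G$. Here I would use that $\Delta = q\prod_{n \geq 1}(1-q^n)^{24}$, so $\Delta^r = q^r E$ with $E = \prod_{n\geq 1}(1-q^n)^{24r} \in 1 + q\ZZ[[q]]$ a unit in $\ZZ[[q]]$, whence $E^{-1} \in 1 + q\ZZ[[q]]$ as well. Writing $G = q^{-r}E^{-1}(\Delta^r G)$ and using left-finiteness of the $q$-expansions, each Fourier coefficient of $G$ is a \emph{finite} $\ZZ$-linear combination of Fourier coefficients of $\Delta^r G$, and multiplying by $q^{-r}$ merely shifts exponents. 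Thus a common denominator for $\Delta^r G$ is also a common denominator for $G$, so $G$ has bounded denominators. As $G$ was arbitrary, (b) follows.
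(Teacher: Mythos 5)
Your proof is correct and follows essentially the same route as the paper: the bounded-denominator forms constitute a nonzero $\frak{R}_{\QQ}$-submodule, Proposition \ref{propRQmod} gives $\Delta^r\mathcal{H}(\rho)_{\QQ}\subseteq I$, and one descends from $\Delta^r G$ to $G$ using that $\Delta^{-r}$ is $q^{-r}$ times a unit of $\ZZ[[q]]$. The paper merely states the submodule property and the final division by $\Delta^r$ without proof, whereas you verify both (closure under $D$ via the rationality of the exponents, and the integrality of $E^{-1}$); these are exactly the right details to check.
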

\begin{proof} The  set of vector-valued modular forms in $\mathcal{H}(\rho)_{\QQ}$ with
bounded denominators is an $\frak{R}_{\QQ}$-submodule of
$\mathcal{H}(\rho)_{\QQ}$, call it $I$.\ If $I \not= 0$ then Proposition \ref{propRQmod} applies.\
 It tells us that
$\Delta^r\mathcal{H}(\rho)_{\QQ}\subseteq I$ for some $r$.\ Thus for any 
$F \in \mathcal{H}(\rho)_{\QQ}$ we find that $\Delta^rF$ has bounded denominators, whence $F = \Delta^{-r}\Delta^rF$ does too.\ The Corollary follows. 
\end{proof}

We turn to the proof of Theorem \ref{thm2:main} and suppose that  $F=\ ^t(g_1, g_2)\in \mathcal{H}(\rho)_{\QQ}$ has weight $k$ and  that $g_1$ and $g_2$ are \emph{not} both modular forms on a congruence subgroup of $\Gamma$.\ We have to show that 
$F$ has unbounded denominators.\ If $\rho$ is irreducible, it suffices by Corollary \ref{c2orbdd} to find \emph{one}
vector-valued modular form in $\mathcal{H}(\rho)_{\QQ}$ with unbounded denominators.\ As explained in the Introduction,
$\mathcal{H}(\rho)$ has  a unique (normalized) nonzero vector-valued modular form
$F_0=\ ^t(f_1, f_2)$ of minimal weight, and $F_0$ has rational Fourier coefficients.\ Let
$f_1=q^{m_1}+..., f_2=q^{m_2}+...$ with notation as in Theorem \ref{thm1:main}; in particular,
$m_1-m_2=P/Q$ with gcd$(P, Q)=1, Q\geq 1$.
$f_1$ and $f_2$ form a fundamental system of solutions of the DE (\ref{diffeq}).\ If $Q=1$ then
$m_1=m_2$ is an integer, and in this case $\rho(T)$ is a scalar (cf.\ \cite{M1}).\ Because $\rho$ is irreducible this is not the case,
so that $Q\geq 2$.

\medskip
 Now we can apply Theorem \ref{thm1:main} to see that either $Q\leq 5$, or else one of $f_1, f_2$ has unbounded denominators.\ In the second case we are done.\
We show that $Q\leq 5$ leads to a contradiction.\ Indeed, in this case Proposition 3.2 of \cite{M2} tells us
that ker$\rho$ is a congruence subgroup of
$\Gamma$.\ But then all components of all vector-valued modular forms in $\mathcal{H}(\rho)$ are
modular forms on the same congruence subgroup, and this contradicts the existence of $F$.
This completes the proof of Theorem \ref{thm2:main} in the case that $\rho$ is irreducible.

\medskip
Now suppose that $\rho$ is \emph{not} irreducible.\ Then we may, and shall, assume that it is
upper triangular,
\begin{eqnarray}\label{indec1}
\rho(\gamma) = \left(\begin{array}{cc}\alpha(\gamma) & \beta(\gamma) \\0 & \delta(\gamma)\end{array}\right)\ \  (\gamma \in \Gamma).
\end{eqnarray}
Note that in this situation, the analog of Corollary
\ref{c2orbdd} is \emph{false}.\ Indeed,  $F=\ ^t(f_1, 0)\in \mathcal{H}(\rho)_{\QQ}$ 
has weight $k$ and bounded denominators whenever $f_1$ is a modular form of weight 
$k$ on $\Gamma$ with character $\alpha$ and rational Fourier coefficients.

\medskip
Let $I\subseteq \mathcal{H}(\rho)_{\QQ}$ be the set of vector-valued modular forms with bounded denominators.\ It is an $\frak{R}_{\QQ}$-submodule, and by our preceding remarks it
contains the space of functions $\frak{M}' = \{^t(f_1, 0)\}$ described above.\ Suppose, by way of contradiction, that $F\in I$.\ Since $F\not\in \frak{M}'$ it follows that $I$ has rank at least $2$ considered as $\frak{M}_{\QQ}$-module.\ At this point, we can apply the proofs of 
Proposition \ref{propRQmod} and Corollary \ref{c2orbdd} word-for-word to see that every
nonzero vector-valued modular form in $\mathcal{H}(\rho)_{\QQ}$ has bounded denominators.

\medskip
We now apply results about indecomposable $2$-dimensional $\rho$ and their associated vector-valued modular forms obtained in
\cite{MM}, Section 4.\ By Lemma 4.3 (loc.\ cit.) we always have $Q=6$ in this case.\ Let 
$F_0=\ ^t(f_1, f_2)\in \mathcal{H}(\rho)_{\QQ}$ be as before, i.e.\ a nonzero vector-valued modular form of least weight
$k_0$.\ As in \cite{MM} we distinguish two cases, according to whether $DF_0=0$ or not.\
 If this does \emph{not} hold then $f_1, f_2$ are, once again, a fundamental system of solutions
of the DE (\ref{diffeq}) and we can apply Theorem 1 immediately to conclude that $F_0$ has unbounded denominators, which is the desired contradiction in this case.\ Note that we may always choose $\rho(T)$ diagonal, in which case
we have in this case ( eqn.\ (27) of \cite{MM}) 
\begin{eqnarray}\label{rhochoice}
&&\rho(T) = \left(\begin{array}{cc}e^{2\pi i m_1} & 0\\0 & e^{2\pi i m_2} \end{array}\right),\\ 
&&0\leq m_2<m_1<1, m_1-m_2 = 1/6\ \mbox{or}\ 5/6, 12m_i\in \ZZ. \notag
\end{eqnarray}

\medskip
Finally, suppose that $DF_0=0$.\ This holds for those indecomposable $\rho'$ for which
\begin{eqnarray*}
&&\rho'(T) = \left(\begin{array}{cc}e^{2\pi i m_1} & 0\\0 & e^{2\pi i m_2} \end{array}\right),\\
&&0\leq m_1<m_2<1, m_2-m_1 = 1/6\ \mbox{or}\ 5/6, 12m_i\in\ZZ.
\end{eqnarray*}
In this case, consider the tensor product $\rho'':=\chi\otimes \rho'$ where $\chi:\Gamma \rightarrow \CC^*$
is the character of $\Gamma$ satisfying $\chi(T)=e^{-2\pi im_2}.$\ Because the isomorphism classes of
indecomposable $2$-dimensional $\rho$ are determined by $\rho(T)$ (\cite{MM}, Lemma 4.3), it
follows that $\rho''$ is equivalent to that $\rho$ in (\ref{rhochoice}) for which $m_2=0$.\ Now we have already proved that $\mathcal{H}(\rho)_{\QQ}$ contains some vector-valued modular form with unbounded denominator, so $\mathcal{H}(\rho'')_{\QQ}$ also contains such a vector-valued modular form, say  $G$.\ Then $\rho' = \chi^{-1}\otimes \rho''$, and if
$m_2=b/12\ (b\in \ZZ)$ then $\eta^{2b}G\in \mathcal{H}(\rho')_{\QQ}$ also has unbounded denominators.\ This final contradiction completes the proof of 
Theorem \ref{thm2:main}. $\hfill \Box$

\section{Final remarks}

Prior to the writing of this paper, the authors were quite mystified by the nature of the denominators of the Fourier coefficients of the coordinate functions of vector valued modular forms that are not themselves modular forms.\ These denominators tend to be nearly squarefree and, with a few exceptional divisors, are divisble only by primes in at most two arithmetic progressions\footnote{Chris Marks has made similar observations.}.\ For example, when one takes $m_1 = 3/10$ and $m_2 = 2/10$, there is a unique irreducible representation
\[
  \rho \colon \SL_2(\ZZ) \to \GL_2(\CC)
\]
such that
\[
  \rho\left(\begin{matrix}
          1&1\\
          0&1\end{matrix}\right) = \left(\begin{matrix}e^{3\pi i /5}&0\\
  0&e^{2\pi i/5}\end{matrix}\right).
\]
Let $F_0$ be a nonzero vector valued modular form for $\rho$ of lowest weight.\ Then $F_0$ may be rescaled to have rational Fourier coefficients such that the $1,000$th Fourier coefficient of the first component function of $F_0$ has denominator equal to $3^2\cdot 13$ times the product of every prime in the arithmetic progression $10n + 9$ in the range $0$ through $10,000$.\ The denominator of the $1,001$st Fourier coefficient of this same $q$-expansion is $3$ times the product of every prime in the arithmetic progression $10n+9$ in the range $0$ through $10,009$.\ For the $1,002$nd coefficient, however, the denominator is $13$ times the product of all primes in the progression $10n+9$ in the same range, except that $919$ is omitted.\ While the connection with hypergeometric series does help to explain the origin of the arithmetic progressions, it is still somewhat mysterious why the denominators of these series tend to be very nearly squarefree products of all primes in one or two arithmetic progressions.\ One might naively expect far more cancellation to appear than seems to be the case.\ A deeper study of the arithmetic properties of these coefficients might prove interesting.

\medskip
The authors hope that the techniques of this paper will lead to progress in understanding vector-valued modular forms associated to higher dimensional representations of the modular group.\ Initial computations suggest that they too are connected with higher order analogues of the hypergeometric differential equation.

\end{document}